\date{}
\title{\vspace{-1cm} Constructing dense graphs with sublinear Hadwiger number}
\author{
Jacob Fox \thanks{Department of Mathematics,
Massachusetts Institute of Technology, Cambridge, MA 02139-4307. E-mail:
fox@math.mit.edu. Research supported by a Simons Fellowship and NSF grant DMS-1069197.}}
\newtheorem{theorem}{Theorem}
\newtheorem{lemma}{Lemma}
\newtheorem{definition}{Definition}
\newtheorem{proposition}[theorem]{Proposition}
\newtheorem{conjecture}{Conjecture}
\begin{document}
\maketitle

\begin{abstract}
Mader asked to explicitly construct dense graphs for which the size of the largest clique minor is sublinear in the number of vertices. Such graphs exist as a random graph almost surely has this property. This question and variants were popularized by Thomason over several articles. We answer these questions by showing how to explicitly construct such graphs using blow-ups of small graphs with this property.
This leads to the study of a fractional variant of the clique minor number, which may be of independent interest.
\end{abstract}

\section{Introduction}

A graph $H$ is a {\it minor} of a graph $G$ if $H$ can be obtained from a subgraph of $G$ by contracting edges.
 Minors form an important connection between graph theory, geometry, and topology. For example,
the Kuratowski-Wagner theorem states that a graph can be embedded in the plane if and only if it has neither the complete graph $K_5$
nor the complete bipartite graph $K_{3,3}$ as a minor. This example played an important role in the development of topological graph theory,
whose masterpiece is the Robertson-Seymour graph minor theorem. In a series of twenty papers \cite{RS1},
they proved Wagner's conjecture that every family of graphs closed under taking minors is characterized by a finite list of forbidden minors.

The {\it Hadwiger number} $h(G)$ of a graph $G$ is the order of the largest clique which is a minor of $G$. The famous conjecture of Hadwiger \cite{Ha} states that every graph of chromatic number $k$ has Hadwiger number at least $k$. Hadwiger proved his conjecture for $k \leq 4$. Wagner \cite{Wa} proved that the case $k=5$ is equivalent to the Four Color Theorem.
In a tour de force, Robertson, Seymour, and Thomas \cite{RST} settled the case $k=6$ also using the Four Color Theorem. The conjecture is still open for $k \geq 7$.

Bollob\'as, Catlin, and Erd\H{o}s \cite{BCE} analyzed the Hadwiger number of random graphs.
They showed that a random graph $G$ on $n$ vertices almost surely satisfies $h(G)$ is asymptotic to $\frac{n}{\sqrt{\log n}}$. Here, and throughout the paper, all
logarithms unless otherwise indicated are in base $2$. Also using the well known fact that the chromatic number of a random graph on $n$ vertices is almost surely $\Theta(n/\log n)$, they deduced that almost all graphs satisfy Hadwiger's conjecture.

Mader showed that large average degree is enough to imply a large clique minor. Precisely, for each integer $t$ there is a
constant $c(t)$ such that every graph $G$ of average degree at least $c(t)$ satisfies $h(G) \geq t$. Kostochka \cite{K,K84} and
Thomason \cite{Th} independently proved that $c(t)=\Theta(t\sqrt{\log t})$. Thomason \cite{Th01} later determined the asymptotic behavior of
 $c(t)$, with random graphs of a particular density as extremal graphs for this problem. Myers \cite{My} proved that any extremal graph under certain conditions
for this problem must be quasirandom.

Random graphs have some remarkable properties for which it is difficult to explicitly construct graphs with these properties. One well-known example
is Erd\H{o}s' lower bound on Ramsey numbers, which shows that almost all graphs on $n$ vertices do not contain a clique or independent set of order $2\log n$.
Despite considerable attention over the last 60 years, there is no known construction (in polynomial time) of a graph $G$ on $n$ vertices for which the largest
clique or independent set in $G$ is of order $O(\log n)$.

Another interesting property of random graphs already mentioned is that almost surely they do not contain a clique minor of linear size.
Mader asked to construct a dense graph on $n$ vertices with $h(G)=o(n)$. One of the main motivations for this problem is that proving interesting upper bounds on
the Hadwiger number of a graph appears to be a difficult problem. Thomason \cite{Th00} showed that many of the standard constructions of quasirandom graphs have
linear clique minors and therefore cannot be used to answer Mader's problem.

Mader's problem and a few variants were discussed by Thomason in several papers \cite{Th00,Th00a,Th01,Th04,Th06}
and also by Myers \cite{My}. Thomason \cite{Th00} posed the stronger problem of constructing a graph $G$ on $n$ vertices for which the Hadwiger numbers of $G$ and its complement $\bar G$ are both $o(n)$.
He speculates \cite{Th00a} that this problem might be as hard as the classical Ramsey problem
of finding explicit graphs $G$ such that both $G$ and $\bar G$ contain only small complete subgraphs.

Here we solve both the problems of Mader and Thomason. To do so, it is helpful to define what an explicit construction is. We view a graph on $2^n$ vertices as a function $f:{\{0,1\}^n \choose 2} \rightarrow \{0,1\}$, where the value of $f$ tells whether or not two vertices are adjacent. By an explicit construction we mean that the function $f$ is computable in polynomial time (in $n$). That is, given two vertices, we can compute whether or not they are adjacent in time polynomial in the number of bits used to represent the vertices. There is also a weaker notion of explicit graph which is sometimes used. In this version, the edges of the graph can be computed in time polynomial in the number of vertices of the graph.

Our construction which answers the problems of Mader and Thomason is given in {\it nearly constant} time using blow-ups of nearly constant size graphs. We show that if a (small) graph is dense and has
relatively small Hadwiger number, then its blow-ups also have this property.

We formally define the blow-up of a graph as follows. For graphs $G$ and $H$, the {\it lexicographic product} $G \cdot H$ is the graph
on vertex set $V(G) \times V(H)$, where $(u_1,v_1),(u_2,v_2) \in V(G) \times V(H)$ are adjacent if and only if $u_1$ is adjacent to $u_2$ in $G$,
or $u_1=u_2$ and $v_1$ is adjacent to $v_2$ in $H$.  Define The {\it blow-up} $G(t)=G \cdot I_t$, where $I_t$ is the empty graph on $t$ vertices.
Define also the {\it complete blow-up} $G[t]=G \cdot K_t$.

A graph $G$ we call {\it $\epsilon$-Hadwiger} if $h(G) \leq \epsilon |G|$. We will show for each $\epsilon>0$ how to construct a graph $G$
on $n(\epsilon)=2^{(1+o(1))\epsilon^{-2}}$ vertices in time $2^{(1+o(1))n(\epsilon)^2/2}$ such that every complete blow-up of $G$ and its complement are $\epsilon$-Hadwiger. Such blow-ups answer the questions of Mader and Thomason,
as one can compute any adjacency between vertices by simply looking at which parts of the blow-up the vertices belong. As the size of $G$ depends only on $\epsilon$,
the time to compute whether two vertices are adjacent is nearly constant for $\epsilon$ slowly tending to $0$.

The bounds above give an explicit construction of a graph on $N$ vertices for which the Hadwiger number of the graph
and its complement is at most $O(\frac{N}{\sqrt{\log \log \log N}})$. For the weaker notion of explicit construction, in which the running time is polynomial
in the number of vertices, the graph on $N$ vertices we obtain has the property that it and its complement has Hadwiger number at most
$O(\frac{N}{\sqrt{\log \log N}})$. While these bounds are sublinear, they do not come close to the tight bound of $O(\frac{N}{\sqrt{\log N}})$ which
almost all graphs on $N$ vertices satisfy.

In order to study the Hadwiger number of a blow-up of a graph, it will be helpful to define a fractional version of the Hadwiger number. This notion
had independently been introduced earlier by Seymour \cite{Se}. A {\it bramble}
$\mathcal{B}$ for a graph $G$ is a collection of connected subgraphs of $G$ satisfying each pair $B,B' \in \mathcal{B}$ share a vertex or there is an edge of $G$
connecting $B$ to $B'$.

\begin{definition}\label{thm2}
The {\it fractional Hadwiger number} $h_f(G)$ of a graph $G$ is the maximum $h$ for which there is a bramble $\mathcal{B}$ for $G$,
and a weight function $w:\mathcal{B} \rightarrow \mathbb{R}_{\geq 0}$ such that $h=\sum_{B \in \mathcal{B}}w(B)$
and for each vertex $v$, the sum of the weights of the subgraphs in $\mathcal{B}$ containing $v$ is at most $1$.
\end{definition}

Define a {\it strong bramble} for a graph $G$ to be a collection of connected subgraphs of $G$ satisfying for each pair $B,B' \in \mathcal{B}$ (with possibly $B=B'$)
there is an edge of $G$ connecting $B$ to $B'$.  We define the {\it lower fractional Hadwiger number} $h'_f(G)$ similarly,
except that $\mathcal{B}$ is required to be a strong bramble and not a bramble. The fractional Hadwiger number and the lower fractional Hadwiger number are closely related.
Indeed, it is easy to show that if $G$ has an edge, then $h_f(G)/2 \leq h'_f(G) \leq h_f(G)$. Equality occurs in the lower bound if $G$ is complete and the upper bound if $G$ is complete bipartite.

For a positive integer $r$, the {\it $r$-integral Hadwiger number} $h_r(G)$ is defined the same as the fractional Hadwiger number, but all weights have to be multiples of $1/r$. We similarly define the lower version $h'_r(G)$. Note that $h_1(G)=h(G)$, and if $s$ is a multiple of $r$, then $h_s(G) \geq h_r(G)$. It is easy to check that $h_f(G)=\lim_{r\to \infty} h_r(G)$, and it follows that
$h_f(G) \geq h(G)$.

The relationship between the Hadwiger number of the blow-up of a graph and the fractional Hadwiger number of the graph is demonstrated by the following simple proposition.

\begin{proposition}\label{simpleprop} For every graph $G$ and positive integer $r$, we have $$h(G[r])=r \cdot h_r(G) \leq r \cdot h_f(G).$$
\end{proposition}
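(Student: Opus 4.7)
The plan is to prove the equality $h(G[r]) = r \cdot h_r(G)$ by two matching inequalities; the bound $r \cdot h_r(G) \leq r \cdot h_f(G)$ is immediate, since every $r$-integral bramble weighting is in particular a fractional bramble weighting. Throughout, let $V_v$ denote the $r$-vertex clique inside $G[r]$ that replaces the vertex $v \in V(G)$.

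For the forward inequality $h(G[r]) \geq r \cdot h_r(G)$, I will inflate a weighted bramble on $G$ into a clique minor in $G[r]$. Take an optimal $r$-integral bramble $\mathcal{B}$ on $G$ with weights $w(B) = k_B/r$ summing to $h_r(G)$. Because $\sum_{B \ni v} k_B \leq r$ for every $v \in V(G)$, I can assign to each pair $(B,v)$ with $v \in B$ a block of $k_B$ vertices inside $V_v$ so that the blocks corresponding to distinct brambles are disjoint within each $V_v$. For each $B$ this produces $k_B$ vertex-disjoint ``copies'' of $B$ in $G[r]$ (one index per $V_v$, $v \in B$); each copy is a connected subgraph of $G[r]$ because $B$ is connected in $G$. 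The main check is that every two of these $r \cdot h_r(G)$ branch sets are joined by an edge in $G[r]$: two copies of the same $B$ with $|B|\geq 2$ use different vertices of $V_u$ and $V_v$ for an edge $uv$ of $B$, and $V_u$--$V_v$ is complete bipartite in $G[r]$; if $|B|=1$, the copies sit in the clique $V_v$ and are adjacent there. For copies of two different brambles $B,B'$, either $B \cap B' \neq \emptyset$ in $G$ and the copies meet the same clique $V_v$, or there is a $G$-edge $uv$ between $B$ and $B'$ and the copies are joined through the complete bipartite graph between $V_u$ and $V_v$.

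For the reverse inequality $h(G[r]) \leq r \cdot h_r(G)$, I will project a $K_h$ minor in $G[r]$ with branch sets $B_1,\dots,B_h$ down to a weighted bramble in $G$. Let $\pi(B_i) := \{v \in V(G) : B_i \cap V_v \neq \emptyset\}$; any path inside $B_i$ projects to a walk in $G$, so $\pi(B_i)$ is connected. Assign the weight $|\{i : \pi(B_i) = B\}|/r$ to each distinct projection $B$. The total weight is $h/r$, and because the $B_i$ are vertex-disjoint and $|V_v|=r$, the total weight at any $v$ is at most $1$. The bramble condition is the one real thing to check: an edge of $G[r]$ between $B_i$ and $B_j$ has endpoints in some $V_{u'}, V_{v'}$ with $u' \in \pi(B_i)$ and $v' \in \pi(B_j)$, and these yield either a shared vertex ($u'=v'$) or a $G$-edge between the two projections.

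I expect no genuine obstacles; the entire argument is bookkeeping driven by the fact that each $V_v$ is a clique and each $G$-edge expands to a complete bipartite graph. The only genuinely structural point is that edges inside a single $V_v$ allow two branch sets to interact there without needing a $G$-edge between their projections, which is precisely how the bramble (rather than strong bramble) condition appears on both sides of the equality.
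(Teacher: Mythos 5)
Your proof is correct and follows essentially the same two-way argument as the paper: projecting a clique minor of $G[r]$ down to a $1/r$-weighted bramble on $G$, and inflating an $r$-integral weighted bramble into $r\cdot h_r(G)$ disjoint branch sets in $G[r]$. You spell out the adjacency and disjointness checks in somewhat more detail than the paper does, but the underlying construction is identical.
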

Essentially the same proof also gives $h(G(r))=r \cdot h'_r(G) \leq r \cdot h'_f(G)$.

Thus, if we found a dense graph $G$ with relatively small fractional Hadwiger number, then the blow-up $G[r]$ would also be dense and
have relatively small Hadwiger number. To solve Mader's problem,
it therefore suffices to show that there are dense graphs $G$ on $n$ vertices with fractional Hadwiger number $h_f(G)=o(n)$.

It is not difficult to show that if $h(G)<4$, then $h_f(G)=h(G)$. However, there are planar graphs on $n$ vertices with $h(G)=4$ and $h_f(G)=\Theta(\sqrt{n})$. Indeed, consider the $\sqrt{n} \times \sqrt{n}$ grid graph. The grid graph is planar and thus has Hadwiger number at most $4$.
For each $i$, let $P_i$ denote the induced path consisting of the vertex $(i,i)$ and all vertices of the grid graph directly below or to the right of this point.
Assigning each of these $\sqrt{n}$ paths weight $1/2$, we get that the fractional Hadwiger number (and even the $2$-integral Hadwiger number) of this grid graph is
at least $\sqrt{n}/2$.  This example also shows that the following upper bound on the fractional Hadwiger number cannot be improved apart from the constant factor.

\begin{theorem}\label{amainthm}
If $G$ is a graph on $n$ vertices, then $$h_f(G) \leq \sqrt{2h(G)n}.$$
\end{theorem}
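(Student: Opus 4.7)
The plan is to extract an integer clique minor from an optimal fractional bramble by random sampling, and then invoke the fact that a clique minor has at most $h(G)$ parts. Set $t = h(G)$, and let $\mathcal{B}$ with weights $w:\mathcal{B}\to\mathbb{R}_{\geq 0}$ be an optimal fractional bramble, so $h := h_f(G) = \sum_B w(B)$ and $\sum_{B \ni v} w(B) \leq 1$ for each vertex $v$ (the supremum is attained since $G$ has only finitely many brambles). Assuming $h>0$, set $k = 2t$ and sample $B_1,\dots,B_k$ independently from $\mathcal{B}$, each equal to $B$ with probability $w(B)/h$.

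The crux is the collision estimate. For $i \neq j$, a union bound over vertices together with independence gives
\[
\pr[B_i \cap B_j \neq \emptyset] \;\leq\; \sum_v \pr[v \in B_i]\pr[v \in B_j] \;=\; \frac{1}{h^2}\sum_v \Big(\sum_{B \ni v} w(B)\Big)^2 \;\leq\; \frac{n}{h^2},
\]
where the final step uses $\sum_{B \ni v} w(B) \leq 1$ twice: first to replace the square by the sum itself (each factor being at most $1$), and then via $\sum_v \sum_{B \ni v} w(B) = \sum_B w(B)|B| \leq n$. Letting $C$ count the pairs $\{i,j\}$ with $B_i \cap B_j \neq \emptyset$, linearity of expectation then gives $\mathbb{E}[C] \leq \binom{k}{2}\,n/h^2 = t(2t-1)\,n/h^2$.

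For the complementary deterministic bound, observe that in any realization, by deleting one representative of each conflicting pair one obtains a sub-collection of at least $k - C$ pairwise vertex-disjoint elements of $\mathcal{B}$; being in a bramble, they pairwise touch, so they form a clique minor of $G$ whose size is at most $t$. Hence $k - C \leq t$ pointwise. Plugging in $k = 2t$ and taking expectations yields $t \leq t(2t-1)\,n/h^2$, hence $h^2 \leq (2t-1)n \leq 2tn$, as desired. The only real choice to make is the sample size $k$, which one optimizes by minimizing $k(k-1)/(k-t)$ over integer $k > t$; the minimum is essentially at $k = 2t$, which is the ``aha'' step. Everything else is a routine calculation.
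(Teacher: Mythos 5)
Your proof is correct, and while it rests on the same structural reduction as the paper's, it carries out the key extraction step by a genuinely different method. Both arguments use the observation that pairwise vertex-disjoint elements of a bramble are connected and pairwise joined by edges, hence form a clique minor of size at most $h(G)$, so the whole problem is to extract roughly $h^2/(2n)$ disjoint elements from a fractional bramble of total weight $h$. The paper does this deterministically, by induction on $n$: it picks a bramble element $B_0$ of minimum size $t$, uses minimality to get $h \leq n/t$, discards every element meeting $B_0$ (losing total weight at most $t$), and recurses on the remaining $n-t$ vertices, finishing with a short chain of elementary inequalities. You instead sample $k=2h(G)$ elements independently with probabilities proportional to their weights, bound the expected number of intersecting pairs by $\binom{k}{2}n/h^2$ via the union bound over vertices together with $\sum_{B \ni v} w(B) \leq 1$ and $\sum_{B} w(B)|B| \leq n$, and then delete one element of each intersecting pair; comparing the pointwise bound $k - C \leq h(G)$ with the expectation of $C$ yields $h^2 \leq (2h(G)-1)n$. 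The details all check out, including the points that matter: repeated samples are nonempty and hence count as collisions, the surviving elements are distinct and pairwise disjoint, the optimum weight function exists by finiteness, and the degenerate cases $h=0$ and $n=0$ are trivial. Your bound $(2t-1)n$ is even marginally sharper than the paper's $2tn$. What the probabilistic deletion buys is the avoidance of the induction and the algebra at the end of the paper's proof; what the paper's greedy argument buys is a fully constructive, derandomized procedure for locating the disjoint subfamily.
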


It is natural to study the fractional Hadwiger number of random graphs. Theorem \ref{amainthm}
implies that a random graph on $n$ vertices almost surely has fractional Hadwiger number $O(n/(\log n)^{1/4})$.
We prove a much better estimate, that the fractional Hadwiger number of a random graph is almost surely asymptotic to its Hadwiger number.
Bollob\'as, Catlin, and Erd\H{o}s \cite{BCE} showed that the random graph $G(n,p)$ on $n$ vertices with fixed edge probability $p$
almost surely has Hadwiger number asymptotic to $\frac{n}{\sqrt{\log_b n}}$, where $b=1/(1-p)$.

\begin{theorem}\label{fractrandom}
The fractional Hadwiger number of a random graph is almost surely asymptotically equal to its Hadwiger number. That is, for fixed $p$ and all $n$,
almost surely $$h_f(G(n,p))=(1+o(1))\frac{n}{\sqrt{\log_b n}},$$ where $b=1/(1-p)$.
\end{theorem}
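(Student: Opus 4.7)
The lower bound is immediate from $h_f(G)\geq h(G)$ (noted in the excerpt) and the Bollob\'as--Catlin--Erd\H{o}s result that $h(G(n,p))=(1+o(1))n/\sqrt{\log_b n}$ almost surely.

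For the upper bound, fix $\epsilon>0$ and set the size threshold $k:=\lceil(1-\epsilon)\sqrt{\log_b n}\rceil$. For any fractional bramble $(\mathcal{B},w)$ of $G(n,p)$, split its total weight as $W_L+W_S$, where $W_L=\sum_{|B|>k}w(B)$ collects weight on large members and $W_S=\sum_{|B|\leq k}w(B)$ on small members. The vertex-coverage constraint $\sum_{B\ni v}w(B)\leq 1$ implies $\sum_B w(B)|B|\leq n$, so immediately
\[
W_L<\frac{n}{k}\leq (1+O(\epsilon))\,\frac{n}{\sqrt{\log_b n}}.
\]
It therefore suffices to show $W_S=o(n/\sqrt{\log_b n})$ almost surely.

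The key observation is that any two vertex-disjoint members of the bramble must be joined by an edge of $G$, but in $G(n,p)$ two disjoint sets of sizes $s_1,s_2\leq k$ contain no edge with probability $(1-p)^{s_1 s_2}\geq b^{-k^2}\geq n^{-(1-\epsilon)^2}$, which is far larger than the threshold $1/n$ below which such non-adjacent pairs would be rare. Hence disjoint small sets without an edge between them are abundant, severely constraining any bramble supported on small members. To turn this into a quantitative bound on $W_S$, reduce to the $r$-integral version: since the linear program defining $h_f$ has finitely many variables (one per connected subgraph of $G$), its optimum is attained at a rational weighting, and thus at $h_r$ for some finite $r$; rescaling produces a multiset of $W_S r$ connected subgraphs of sizes $\leq k$, with each vertex covered at most $r$ times and any two multiset members touching. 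A first-moment union bound over such multisets---in the spirit of the Bollob\'as--Catlin--Erd\H{o}s argument, and arranged so that the $r$-dependence cancels---then shows that the expected number of such configurations in $G(n,p)$ tends to zero once $W_S>\epsilon\,n/\sqrt{\log_b n}$.

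Combining the two bounds yields $h_f(G(n,p))\leq W_L+W_S\leq (1+O(\epsilon))\,n/\sqrt{\log_b n}$ almost surely, and letting $\epsilon\to 0$ gives the theorem. The main obstacle is the first-moment computation for $W_S$: one must enumerate multisets of small connected subgraphs satisfying the pairwise-touching property and show that the resulting combinatorial count is dominated by the probability that all required disjoint pairs actually share an edge in $G(n,p)$, with the estimate arranged to hold uniformly in the denominator $r$ of the discretization.
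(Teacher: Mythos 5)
Your lower bound and your treatment of the large members ($W_L \leq n/k$ via $\sum_B w(B)|B| \leq n$) are exactly right and match the paper. The gap is in the step you yourself flag as ``the main obstacle'': bounding $W_S$. You propose to discretize to $h_r$ via rationality of the LP optimum and then run a first-moment union bound over multisets of $W_S r$ small connected subgraphs with pairwise-touching structure, asserting that ``the $r$-dependence cancels.'' This is not carried out, and as stated it faces a real difficulty: the number of such multisets grows with $r$, while the probabilistic ``payment'' comes only from \emph{vertex-disjoint} pairs (overlapping members of a bramble need not be joined by an edge), and a weighting of total small weight $W_S$ can concentrate on heavily overlapping sets so that the number of disjoint pairs forced to carry an edge is far smaller than $\binom{W_S r}{2}$. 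You would need an additional argument to extract enough disjointness before any first-moment bound can bite, and that extraction is precisely the missing idea.

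The paper resolves this with a short deterministic step that makes the discretization unnecessary: from the subfamily $\mathcal{B}''$ of small members, greedily extract a \emph{maximal} collection $B_1,\dots,B_s$ of pairwise vertex-disjoint subgraphs. By maximality every member of $\mathcal{B}''$ meets $\bigcup_i B_i$, and the constraint that each vertex carries weight at most $1$ gives $\sum_i |B_i| \geq W_S$, hence $s \geq W_S/d$ since each $|B_i| \leq d$. These $s$ sets form an honest clique minor of breadth at most $d$, and a separate first-moment lemma (the paper's Lemma 1, a refinement of Bollob\'as--Catlin--Erd\H{o}s bounding clique minors of bounded breadth by roughly $4n^{1-\epsilon} d \ln n$) shows such a minor is almost surely small, forcing $W_S = o(n/\sqrt{\log_b n})$. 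So the first-moment computation is done once, over genuine (integral, disjoint) clique minors rather than over weighted multisets, and no uniformity in $r$ is ever needed. To complete your proof you should replace the discretization-plus-multiset-enumeration plan with this maximal-disjoint-subfamily extraction, and then prove the bounded-breadth clique-minor lemma, which is a routine union bound over $s$-tuples of sets of size at most $d$.
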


We conjecture that a stronger result holds, that they are in fact almost surely equal.

\begin{conjecture}
A graph $G$ on $n$ vertices picked uniformly at random almost surely satisfies $h(G)=h_f(G)$.
\end{conjecture}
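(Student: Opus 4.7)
The plan is to upgrade the asymptotic identity of Theorem \ref{fractrandom} to an exact one, by showing that almost surely every optimal fractional bramble of $G=G(n,1/2)$ can be replaced by an integer bramble of no smaller total weight. The starting point is a structural analysis of near-optimal fractional brambles: since $h(G)$ and $h_f(G)$ both equal $(1+o(1))n/\sqrt{\log n}$, and the Bollob\'as--Catlin--Erd\H{o}s construction realizes the integer value using about $n/\sqrt{\log n}$ pairwise-touching branch sets, each an almost-clique on about $\sqrt{\log n}$ vertices, I would first prove that essentially all weight in any optimal fractional bramble must sit on connected subgraphs of that same size and structure. This should follow from pushing the argument used for the upper bound in Theorem \ref{fractrandom}: subgraphs that are either much smaller or not almost-cliques should contribute negligibly to the total weight.

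With this structural handle I would attempt an LP integrality argument. The fractional Hadwiger problem is a packing LP over connected subgraphs subject to the vertex-capacity constraints and the pairwise-touching bramble condition, and an extreme optimum has support of size at most $n$. The task reduces to showing that for a typical $G(n,1/2)$ this restricted LP almost surely has an integer optimum. I would try to exploit the generic nature of $G(n,1/2)$ (trivial automorphism group, strong concentration of local subgraph counts) to argue that any non-integral extreme optimum admits a perturbation direction preserving all tight constraints, yielding after iteration an optimum with $\{0,1\}$-weights. A parallel two-point concentration route is also natural: show that both $h(G)$ and $h_f(G)$ concentrate on a short interval of consecutive integers and then match them coordinate by coordinate via a monotone coupling.

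The main obstacle is the integrality step itself. The grid example in the paper shows that $h_f(G)-h(G)$ can be $\Omega(\sqrt n)$ on highly structured dense graphs, so the conjecture is really a statement about what \emph{random} graphs rule out, not a general LP-integrality phenomenon. Closing a gap of even $1$ almost surely seems to require a new ingredient, whether an anti-concentration estimate for the value of a typical bramble or a resampling/switching argument that converts a fractional certificate into an integral one of the same weight. Since the analogous exact coincidence of $\chi$ and $\chi_f$ for random graphs is already open and this version carries the extra combinatorial complication of the bramble constraint, I expect the conjecture to be of at least comparable difficulty.
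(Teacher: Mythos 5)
This statement is posed in the paper as a conjecture; the paper offers no proof, and neither does your proposal. What you have written is a research plan whose decisive step is explicitly missing, and you say so yourself in the final paragraph. The structural reduction in your first paragraph (all weight in a near-optimal bramble concentrates on branch sets of order about $\sqrt{\log_b n}$) is plausible and is essentially a sharpening of the argument already used for Theorem \ref{fractrandom}, but it only re-derives the asymptotic statement $h_f = (1+o(1))h$; it gives no control over an additive gap of size $1$, which is what the conjecture demands.

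The concrete gaps are these. First, the ``LP integrality'' framing is not quite right: the fractional Hadwiger number is not the optimum of a single packing LP, because the bramble condition (every pair of sets in the support must touch) is a constraint on which supports are admissible, not a linear constraint on the weights. So an extreme point of the feasible region for one fixed bramble $\mathcal{B}$ need not be comparable to the optimum over all brambles, and the standard perturbation argument for showing a basic optimal solution is integral does not apply; you would need to argue simultaneously over all admissible supports. Second, the proposed ``two-point concentration plus monotone coupling'' route presupposes that $h(G(n,1/2))$ concentrates on an interval of $O(1)$ consecutive integers, which is not known and is itself likely to be very hard (compare the chromatic number of dense random graphs, where such concentration is a famous open problem). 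Third, you correctly note that the grid example shows $h_f - h$ can be polynomially large, so any proof must use randomness in an essential way — but you do not identify what property of $G(n,p)$ would rule out a gap of exactly $1$. Until that ingredient is supplied, this remains a conjecture, as the paper states it.
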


This would imply that for most graphs $G$, the ratio of the Hadwiger number of $G$ to the number of vertices of $G$ is the same as for its blow-ups.

\vspace{0.1cm}
\noindent {\bf Organization:} In the next section, we establish several upper bounds on the fractional Hadwiger number, including
Theorems \ref{amainthm} and \ref{fractrandom}. In Section \ref{sect3}, we use these upper bounds on the fractional Hadwiger number to construct dense graphs with
sublinear Hadwiger number. We finish with some concluding remarks. We sometimes omit floor and ceiling signs for clarity of presentation. 

\section{Upper bounds on the fractional Hadwiger number}\label{sect2}

In this section we establish several upper bounds on the fractional Hadwiger number of a graph. We begin by proving Theorem \ref{amainthm}, which states that if $G$ is a graph on $n$ vertices, then $h_f(G) \leq \sqrt{2h(G)n}$.

\vspace{0.1cm}
\noindent {\bf Proof of Theorem \ref{amainthm}:} Let $\mathcal{B}$ be a bramble for a graph $G$ on $n$ vertices.
Let $w:\mathcal{B} \rightarrow \mathbb{R}_{\geq 0}$ be a weight function such that $h=\sum_{B \in \mathcal{B}}w(B)$
and for each vertex $v$, the sum of the weights of the subgraphs in $\mathcal{B}$ containing $v$ is at most $1$.
It suffices to show that $\mathcal{B}$ contains a subcollection of at least $\frac{h^2}{2n}$ vertex-disjoint subgraphs.
Indeed, contracting these subgraphs we get a clique minor in $G$ of order at least $\frac{h^2}{2n}$,
and picking $\mathcal{B}$ and $w$ to maximize $h$, we have $h=h_f(G)$ so that $h(G) \geq \frac{h_f(G)^2}{2n}$ or equivalently $h_f(G) \leq \sqrt{2h(G)n}$.
We will prove the desired lower bound on the maximum number of vertex-disjoint trees in $\mathcal{B}$ by induction on $n$.
The base case $n=1$ clearly holds, and suppose the desired bound holds for all $n'<n$.

Let $B_0$ be a subgraph in $\mathcal{B}$ with the minimum number of vertices, and let $t=|B_0|$.
Since for each vertex $v$, the sum of the weights of the subgraphs in $\mathcal{B}$ containing $v$ is at most $1$,
summing this inequality over all vertices yields $$\sum_{B \in \mathcal{B}}w(B)|B| \leq n.$$ In particular, $$h =\sum_{B \in \mathcal{B}}w(B) \leq n/|B_0|=n/t.$$

Delete from $\mathcal{B}$ all subgraphs containing a vertex in $B_0$, and let $\mathcal{B}'$ be the resulting subcollection of subgraphs.
Since for each vertex $v$, the sum of the weights of the trees containing $v$ is at most $1$, we have $\sum_{B \in \mathcal{B}'}w(B) \geq h-t$.
The number of vertices not in $B_0$ is $n-t$. Hence, from a maximum subcollection of vertex-disjoint subgraphs in $\mathcal{B}'$ and adding $B_0$,
we get by induction at least $$1+\frac{(h-t)^2}{2(n-t)} \geq 1+\frac{(h-t)^2}{2n}=1+\frac{h^2}{2n}(1-\frac{t}{h})^2 \geq 1+\frac{h^2}{2n}(1-\frac{n}{h^2})^2 \geq 1+\frac{h^2}{2n}(1-\frac{2n}{h^2})=\frac{h^2}{2n}$$
vertex disjoint subgraphs in $\mathcal{B}$, which completes the proof.\qed

We next establish a useful lemma for proving Theorem \ref{fractrandom}. This lemma extends the result of Bollob\'as, Catlin, and Erd\H{o}s \cite{BCE} on the largest 
clique minor in a random graph by giving a bound on the size of the largest clique minor in which the size of the connected subgraphs corresponding to the vertices of the clique are bounded. Recall that a clique minor
in a graph $G$ of size $t$ consists of $t$ vertex disjoint connected subsets $V_1,\ldots,V_t$, such that for each pair $i,j$ with $i<j$, there is an edge of $G$
with one vertex in $V_i$ and the other in $V_j$. Define the {\it breadth} of the clique minor to be $\max_i |V_i|$.

\begin{lemma} \label{lemforrand}
Let $0<p<1$ be fixed, $0<\epsilon<1$, and define  $d:=\sqrt{(1-\epsilon)\log_b n}$ with $b=1/(1-p)$. Almost surely, the largest clique minor in $G(n,p)$ of breadth at most $d$ has order at most $4n^{1-\epsilon}d\ln n$.
\end{lemma}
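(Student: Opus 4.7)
The plan is a standard first-moment (union bound) argument in the spirit of Bollob\'as--Catlin--Erd\H{o}s, but parameterized by the breadth constraint. Set $t = 4n^{1-\epsilon}d\ln n$; by monotonicity (a clique minor of order larger than $t$ contains one of order exactly $t$ with the same breadth bound), it suffices to show that almost surely there is no clique minor of order $t$ and breadth at most $d$. A clique minor is specified by a sequence $(V_1,\ldots,V_t)$ of pairwise disjoint, \emph{connected} subsets with at least one edge between each pair. For the upper bound I would simply drop the connectivity requirement, which only makes existence easier and thus the probability larger.

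Next I would count candidate sequences. Since we only need $|V_i| \le d$, a crude bound gives at most $n^{td}$ ordered tuples of (not necessarily disjoint) subsets of size at most $d$, which is more than enough. Then, crucially, the events ``there is an edge between $V_i$ and $V_j$'' for different pairs $\{i,j\}$ depend on disjoint sets of potential edges, so they are mutually independent. For a fixed admissible tuple,
\[
\Pr[\text{no edge between } V_i \text{ and } V_j] = (1-p)^{|V_i||V_j|} \;\ge\; (1-p)^{d^2},
\]
and here is where the definition of $d$ pays off: $(1-p)^{d^2} = b^{-(1-\epsilon)\log_b n} = n^{-(1-\epsilon)}$. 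Hence the probability that every pair has a connecting edge is at most
\[
\left(1-n^{-(1-\epsilon)}\right)^{\binom{t}{2}} \le \exp\!\left(-\tbinom{t}{2}\,n^{-(1-\epsilon)}\right).
\]

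Combining, the union bound gives
\[
\Pr[\,\exists\text{ clique minor of order }t,\ \text{breadth}\le d\,] \;\le\; \exp\!\left(t d\ln n - \tbinom{t}{2}\,n^{-(1-\epsilon)}\right).
\]
With our choice of $t$, the first term in the exponent is $4n^{1-\epsilon}d^2\ln^2 n$, while for $n$ large enough $\binom{t}{2}n^{-(1-\epsilon)} \ge (t^2/3)\,n^{-(1-\epsilon)} = (16/3)\,n^{1-\epsilon}d^2\ln^2 n$, so the whole exponent is at most $-\tfrac{4}{3}n^{1-\epsilon}d^2\ln^2 n \to -\infty$. This completes the argument.

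The step I expect to require the most care is picking the combinatorial bound on the number of models cleanly enough that the $td\ln n$ term from the counting is dominated by the $\binom{t}{2}n^{-(1-\epsilon)}$ term from the probability; this is exactly what the specific constant $4$ in $t=4n^{1-\epsilon}d\ln n$ is tuned for, and one has to verify the inequality for all sufficiently large $n$ (not merely asymptotically) so as to conclude almost sure nonexistence via Borel--Cantelli or a direct $o(1)$ estimate. Everything else—ignoring connectivity, pairwise independence across disjoint pairs, and the algebraic identity $(1-p)^{d^2}=n^{-(1-\epsilon)}$—is routine once this balance is set.
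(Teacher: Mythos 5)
Your proposal is correct and is essentially the paper's own argument: a first-moment/union bound over at most $n^{td}$ tuples of sets of size at most $d$, using independence across pairs and the identity $(1-p)^{d^2}=n^{-(1-\epsilon)}$ to show the expected number of such minors is $o(1)$. The only cosmetic difference is that you keep $(1-n^{-(1-\epsilon)})^{\binom{t}{2}}$ and exponentiate at the end, whereas the paper applies $1-x\le e^{-x}$ pair by pair.
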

\begin{proof}
If $d<1$, this trivially holds as there is no such nonempty clique minor of breadth at most $d$. Hence, we may assume $d \geq 1$. Consider a collection $C=\{V_1,\ldots,V_h\}$ of $h=\lceil 4n^{1-\epsilon}d\ln n \rceil$ nonempty vertex subsets each of size at most $d$. A rather crude estimate (which is sufficient for our purposes) on the number of such collections
is that it is at most $n^{dh}$. For each pair $V_i,V_j$, the probability there is an edge between $V_i$ and $V_j$ is
$$1-(1-p)^{|V_i||V_j|} \leq 1-(1-p)^{d^2} \leq e^{-(1-p)^{d^2}}=e^{-n^{\epsilon-1}},$$
where we used the inequality $1-x \leq e^{-x}$ for $0<x<1$ with $x=(1-p)^{d^2}$. 
 By independence, the probability
that there is, for all $1 \leq i < j \leq h$, an edge between $V_i$ and $V_j$ is at most $e^{-n^{\epsilon-1}{h \choose 2}}$.

Therefore, the expected number of clique minors of breadth at most $d$ and size at least $h$ is at most
$$n^{dh}e^{-n^{\epsilon-1}{h \choose 2}}=e^{h\left(d\ln n - n^{\epsilon-1}(h-1)/2\right)}=o(1).$$ 
This implies that almost surely no such clique minor exists. 
\end{proof}

Now we are ready to prove Theorem \ref{fractrandom}.

\vspace{0.1cm}
\noindent {\bf Proof of Theorem \ref{fractrandom}:} Let $G=G(n,p)$ be a random graph on $n$ vertices with edge density $p$. Let $b=1/(1-p)$ and
$\epsilon=4\frac{\log \log n}{\log n}$. Let $\mathcal{B}$ be a bramble for $G$. Suppose there is a weight function $w:\mathcal{B} \rightarrow \mathbb{R}_{\geq 0}$
such that $h = \sum_{B \in \mathcal{B}}w(B)$
and for each vertex $v$, the sum of the weights of the subgraphs in $\mathcal{B}$ containing $v$ is at most $1$.

Let $\mathcal{B}'$ denote the subcollection of subgraphs in $\mathcal{B}$ each with more $d=\sqrt{(1-\epsilon)\log_b n}$ vertices,
and $\mathcal{B}''=\mathcal{B} \setminus \mathcal{B}'$. We have
$$n \geq \sum_{B \in \mathcal{B}}w(B)|B| \geq \sum_{B \in \mathcal{B}'}w(B)|B| \geq d\sum_{B \in \mathcal{B}'}w(B),$$ where the first inequality follows from the fact that the sum of the weights of the subgraphs in $\mathcal{B}$ containing any given vertex is at most $1$. 
Hence, $\sum_{B \in \mathcal{B}'}w(B) \leq \frac{n}{d}$ and $$\sum_{B \in \mathcal{B}''}w(B) \geq h-\frac{n}{d}.$$

We now pick out a maximal subcollection of vertex-disjoint subgraphs in $\mathcal{B}''$.
We can greedily do this, picking out vertex disjoint subgraphs $B_1,\ldots,B_s$ until there are no more subgraphs in $\mathcal{B}''$
remaining which are vertex-disjoint from these subgraphs. Since the sum of the weight of all subgraphs containing a given
vertex is at most $1$, we must have $\sum_{i=1}^s |B_i| \geq h-\frac{n}{d}$. Since also $|B_i| \leq d$ for each $i$,
we have $s \geq \frac{h-n/d}{d}$. On the other hand, by Lemma \ref{lemforrand}, since $B_1,\ldots,B_s$ forms a clique minor of size $s$ and depth at most $d$,
almost surely $s \leq n^{1-\epsilon}d\ln n$. We therefore get almost surely
$$h \leq \frac{n}{d}+ds \leq \frac{n}{d}+n^{1-\epsilon}d^2 \ln n < (1+\epsilon)\frac{n}{\sqrt{\log_b n}},$$
where we use $n$ is sufficiently large, $n^{\epsilon}=\log^4 n$, $d=\sqrt{(1-\epsilon)\log_b n}$ and the estimate $\frac{1}{\sqrt{1-\epsilon}}<1+\frac{2}{3}\epsilon$
for $\epsilon<1/4$. As also $h_f(G) \geq h(G)$, and almost surely $h(G)=(1+o(1))\frac{n}{\sqrt{\log_b n}}$, this estimate completes the proof.
\qed

Note that there is an edge between each pair of connected subgraphs corresponding to the vertices of a clique minor. It follows that if $G$ is a graph with $m$ edges, then
$m \geq {h(G) \choose 2}$. We finish the section with a similar upper bound on the fractional Hadwiger number.

\begin{proposition}\label{propeasy}
If a graph $G$ has $m$ edges, then $h_f(G) \leq \sqrt{3m+1}$.
\end{proposition}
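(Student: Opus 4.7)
The plan is to establish a fractional analogue of the integer bound $\binom{h(G)}{2}\le m$. Let $\mathcal{B}$ with weights $w:\mathcal{B}\to\mathbb{R}_{\ge 0}$ be an optimal bramble for $G$, so $h:=\sum_B w(B)=h_f(G)$, and set $f(v):=\sum_{B\ni v}w(B)\le 1$. I will show $h^2\le 3m+1$ by bounding cross-terms using the bramble condition and then exploiting a structural observation about the support of $f$.

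First, expand $h^2=\sum_B w(B)^2+\sum_{B\ne B'}w(B)w(B')$. For each ordered pair $(B,B')$ with $B\ne B'$, the bramble condition supplies either a shared vertex $v\in V(B)\cap V(B')$ or an edge $e=uv$ with $u\in V(B)$, $v\in V(B')$. Charging such pairs to vertices and edges respectively (allowing some overcounting) gives
\[\sum_{B\ne B'}w(B)w(B')\le \sum_v\!\Bigl(f(v)^2-\sum_{B\ni v}w(B)^2\Bigr)+2\sum_{e=uv}f(u)f(v).\]
Because $\sum_B w(B)^2\le \sum_B w(B)^2|V(B)|=\sum_v\sum_{B\ni v}w(B)^2$, the $w(B)^2$ contributions on the two sides cancel, and then applying $f\le 1$ (so that $f(v)^2\le f(v)$ and $f(u)f(v)\le 1$) yields
\[h^2\le \sum_v f(v)^2+2\sum_{e=uv}f(u)f(v)\le \sum_v f(v)+2m.\]

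The main obstacle, and the crux of the proof, is the bound $\sum_v f(v)\le m+1$. The key observation is that the support $S:=\{v:f(v)>0\}=\bigcup_{w(B)>0}V(B)$ is connected in $G$: each $V(B)$ is connected, and the bramble condition ensures that any two $V(B),V(B')$ in this union are joined by either a common vertex or an edge of $G$, so their union is connected; iterating shows $G[S]$ is connected. A connected graph on $|S|$ vertices has at least $|S|-1$ edges in $G$, so $|S|\le m+1$ and hence $\sum_v f(v)\le |S|\le m+1$. Substituting gives $h^2\le (m+1)+2m=3m+1$, i.e.\ $h_f(G)\le\sqrt{3m+1}$.
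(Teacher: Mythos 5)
Your proof is correct and follows essentially the same double-counting argument as the paper: both charge the cross terms $w(B)w(B')$ for vertex-disjoint pairs to edges of $G$ (yielding the $2m$), and both absorb a remaining term of size $\sum_v f(v)=\sum_B w(B)|V(B)|\le m+1$ coming from pairs that share a vertex. The only cosmetic difference is that you bound $h^2$ from above directly and justify the $m+1$ via connectivity of the support of $f$, whereas the paper lower-bounds the disjoint-pair sum by $h^2-n$ after reducing to the case that $G$ itself is connected.
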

\begin{proof}
It is easy to see that we may assume that $G$ is connected and hence the number of vertices of $G$ is at most $m+1$. Let $\mathcal{B}$ be a bramble for $G$.
Suppose there is a weight function $w:\mathcal{B} \rightarrow \mathbb{R}_{\geq 0}$ such that $h = \sum_{B \in \mathcal{B}}w(B)$
and for each vertex $v$, the sum of the weights of the connected subgraphs in $\mathcal{B}$ containing $v$ is at most $1$.

Consider the sum $S=\sum w(B)w(B')$ over all ordered pairs of vertex-disjoint subgraphs in $\mathcal{B}$.
For any fixed subgraph $B$, the sum of the weights of the subgraphs in $\mathcal{B}$ containing at least one vertex in $B$ is at most $|B|$,
so the sum $\sum_{B'}w(B')$ over all subgraphs $B' \in \mathcal{B}$ disjoint from $B$ is at least $h-|B|$. Therefore,
$S \geq \sum_{B \in \mathcal{B}} w(B)(h-|B|)=h^2-\sum_{B \in \mathcal{B}} w(B)|B| \geq h^2-n$. For each edge $(i,j)$, the sum $\sum w(B)w(B')$ over all pairs of
vertex-disjoint subgraphs in $\mathcal{B}$ with $i \in V(B)$ and $j \in V(B')$ is at most $1$ since the sum of the weights of the subgraphs containing
any given vertex is at most $1$. As between each pair of vertex-disjoint subgraphs in $\mathcal{B}$ there is at least one edge,
we therefore get $S \leq 2m$. It follows $h \leq \sqrt{2m+n} \leq \sqrt{3m+1}$, which completes the proof.
\end{proof}

\section{Dense graphs with sublinear Hadwiger number}\label{sect3}

The purpose of this section is to give the details for the explicit construction of a dense graph with sublinear Hadwiger number.
We begin this section by proving Proposition \ref{simpleprop}, which states that  $$h(G[r])=r \cdot h_r(G) \leq r \cdot h_f(G)$$ holds for every graph $G$ and positive integer $r$. 

\vspace{0.1cm}
\noindent {\bf Proof of Proposition \ref{simpleprop}:} Let $G$ be a graph and $G[r]$ be the complete blow-up of $G$. Consider a maximum clique minor
in $G[r]$ of order $t=h(G[r])$ consisting of disjoint connected vertex subsets $V_1,\ldots,V_t$ with an edge between a vertex in $V_i$ and a vertex in $V_j$ for $i \not = j$. Let
$B_i$ be the vertex subset of $G$ where $v \in B_i$ if there is a vertex in the blow-up of $v$ which is also in $V_i$. The collection $\mathcal{B}=\{B_1,\ldots,B_t\}$ is clearly a
bramble. Define the weight $w(B_i)=1/r$ for each $i$. For each vertex $v$ of $G$, as $V_1,\ldots,V_t$ are vertex disjoint, at most $r$ sets $B_i$ contain $v$. Hence,
the bramble $\mathcal{B}$ with this weight function demonstrates $h_r(G) \geq h(G[r])/r$.

In the other direction, consider a bramble $\mathcal{B}$ for $G$ and a weight function $w$ on $\mathcal{B}$ such that $w(B)$ is a
multiple of $1/r$ for all $B \in \mathcal{B}$ and for every vertex $v$, the sum of the weights $w(B)$ over all $B$ containing $v$ is at most $1$. For
each such bramble $B$, we pick out $rw(B)$ copies of $B$ in the blow-up of $B$ in $G[r]$, such that all of the copies are vertex-disjoint. We can
do this since $rw(B)$ is a nonnegative integer, and for each vertex $v$ of $G$, the sum of $rw(B)$ over all $B \in \mathcal{B}$ which contain $v$ is at most $r$.
These copies of the sets in $\mathcal{B}$ form a clique minor in $G[r]$ of order $\sum_{B \in B} rw(B)=rh_r(G)$. Hence $h(G[r]) \geq rh_r(G)$, and we have
proved $h(G[r])/r=h_r(G)$. Since $h_r(G) \leq h_f(G)$, the proof is complete. \qed

The following theorem shows how to find, for each $0<\epsilon,p<1$, a graph $G$ of edge density at least $p$ such that the ratio of the Hadwiger number of $G$ to the number of vertices of $G$ is at most $\epsilon$ for $G$ and its blow-ups.

\begin{theorem}\label{thm0}
For each $0 < \epsilon,p<1$, there is a graph $G$ with edge density at least $p$ and $h_f(G) \leq \epsilon$. 
In particular, every complete blow-up of $G$ has edge-density at least $p$ and is $\epsilon$-Hadwiger. 
Moreover, for $p$ fixed and $\epsilon$ tending to $0$, the graph $G$ has $n_0=b^{\epsilon^{-2}+o(1)}$ vertices with $b=1/(1-p)$ and  can be found in time ${N \choose pN}^{1+o(1)}$ with $N={n_0 \choose 2}$.
\end{theorem}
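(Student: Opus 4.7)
The plan is to realize $G$ as a typical random graph on $n_0$ vertices and then convert the existence statement into an explicit brute-force search whose running time matches the stated bound.

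For existence, let $\eta=\eta(\epsilon)=o(1)$, take $p'=p+\eta$, $b'=1/(1-p')$, and let $n_0$ be the smallest integer for which $(1+\eta)/\sqrt{\log_{b'} n_0}\le \epsilon$; this solves to $n_0=b^{\epsilon^{-2}+o(1)}$. By Theorem~\ref{fractrandom} applied to $G(n_0,p')$, almost surely $h_f(G)=(1+o(1))n_0/\sqrt{\log_{b'} n_0}\le \epsilon n_0$, and by a Chernoff estimate the edge density of $G(n_0,p')$ almost surely exceeds $p$. Hence a graph with both properties exists. The ``in particular'' claim then follows from Proposition~\ref{simpleprop}: for every $r\ge 1$, $h(G[r])\le r\cdot h_f(G)\le \epsilon r n_0=\epsilon|G[r]|$, so $G[r]$ is $\epsilon$-Hadwiger; and since each vertex of $G$ becomes a $K_r$ and each edge a $K_{r,r}$ in $G[r]$, the edge density of $G[r]$ is at least that of $G$, in particular at least $p$.

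For the explicit construction, enumerate all graphs on the labelled vertex set $\{1,\dots,n_0\}$ having exactly $\lceil pN\rceil$ edges, where $N=\binom{n_0}{2}$. There are $\binom{N}{\lceil pN\rceil}=\binom{N}{pN}^{1+o(1)}$ such graphs, and by the above at least one satisfies $h_f(G)\le \epsilon n_0$. For each candidate, rather than compute $h_f(G)$ exactly, I would mimic the proof of Theorem~\ref{fractrandom}: setting $d=\sqrt{(1-\eta)\log_b n_0}$ and $s=4n_0^{1-\eta}d\ln n_0$, verify that $G$ contains no clique minor of order at least $s$ all of whose branch sets have size at most $d$. The split-the-bramble argument in that proof shows that the absence of such a clique minor implies $h_f(G)\le n_0/d+ds\le \epsilon n_0$ for the chosen parameters, so this is a valid certificate. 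Enumerating all candidate collections of at most $s$ connected vertex subsets of size $\le d$ takes $n_0^{O(sd)}=2^{O(n_0^{1-\eta}\log^3 n_0)}=2^{o(n_0^2)}$ time per graph.

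Since $\binom{N}{pN}=2^{\Theta(n_0^2)}$, the per-graph verification cost of $2^{o(n_0^2)}$ is only a $\binom{N}{pN}^{o(1)}$ factor, yielding total running time $\binom{N}{pN}^{1+o(1)}$. The main obstacle I anticipate is precisely this running-time accounting: computing $h_f(G)$ exactly appears expensive (an LP with combinatorial bramble-support constraints), so one must design a certificate cheap enough to fit inside the $(1+o(1))$-factor in the exponent, and the bounded-breadth clique-minor condition from Lemma~\ref{lemforrand} is the natural choice because Theorem~\ref{fractrandom}'s proof already shows that ruling it out forces $h_f$ to be small.
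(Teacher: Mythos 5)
Your proposal is correct and follows essentially the same route as the paper: both derive existence from Theorem~\ref{fractrandom}, use the absence of a clique minor of order $s$ and breadth at most $d$ (Lemma~\ref{lemforrand}) as the efficiently checkable certificate that $h_f(G)\le n_0/d+ds\le\epsilon n_0$, and brute-force over the $\binom{N}{pN}$ candidate graphs with the $n_0^{ds}=2^{o(n_0^2)}$ per-graph test absorbed into the $(1+o(1))$ exponent. The only cosmetic difference is that you boost the density with $p'=p+\eta$ where the paper instead notes the density is at least $p$ with constant positive probability; both are fine.
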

\begin{proof}
From Theorem \ref{fractrandom}, we have that the random graph $G(n_0,p)$ almost surely has fractional
Hadwiger number $(1+o(1))\frac{n_0}{\sqrt{\log_b n_0}}$. Also, with at least constant positive probability, the edge density of such a random graph is at least $p$. Furthermore, Lemma \ref{lemforrand} shows that
$G(n_0,p)$ almost surely has the stronger property that its largest clique minor of depth at most $d=(1+\delta)\frac{n_0}{\sqrt{\log_b n_0}}$ with 
$\delta=4\frac{\log \log n_0}{\log n_0}$ has order less than
$s=4n_0^{1-\delta}d\ln n_0$. This is indeed stronger as in the proof of Theorem \ref{fractrandom}, we can bound the fractional Hadwiger number
from above by $n_0/d+ds$, which is less than $\epsilon n_0$ if the $o(1)$ term in the definition of $n_0$ is picked correctly. To show that a graph does not have a clique minor of depth at most $d$ and order $s$, it suffices to simply test all possible disjoint 
vertex subsets $V_1,\ldots,V_s$ with $|V_i| \leq d$ for $1 \leq i \leq s$, and check if each $V_i$ is connected and there is an edge between each $V_i$ and $V_j$ for $i \not = j$.
There are at most $n_0^{ds}$ such $s$-tuples of subsets to try.

Thus, by testing each graph on $n_0$ vertices with edge density $p$ for a clique minor of order $s$ and depth at most $d$, we will find such a graph $G$ without a clique minor order $s$ and depth at most $d$, and this is the desired graph $G$. The number of labeled graphs on $n_0$ vertices with edge density $p$ is
${N \choose pN}$ with $N={n_0 \choose 2}$ The amount of time, roughly $n_0^{ds}$, needed to test each such graph is a lower order term.
\end{proof}

If we wish to get an explicit construction of a dense graph which is $\epsilon$-Hadwiger on a given number $n$ of vertices, if $n$ is not a multiple of $n_0$, 
we can take a slightly larger blow-up of a small graph on $n_0$ vertices, and simply delete a few vertices (less than $n_0$ vertices with at most one 
from each clique in the complete blow-up).

The next theorem gives a solution to Thomason's problem by explicitly constructing a dense graph, which is a blow-up of a small graph $G$, for which the Hadwiger number of the graph and its complement are both relatively small.

\begin{theorem}\label{thm1}
For all $0<\epsilon<1$ there is a graph $G$ on $n_0=2^{(1+o(1))\epsilon^{-2}}$ vertices which can be found in time $2^{(1+o(1))n_0^2/2}$ such that 
$\max(h_f(G),h_f(\bar G)) \leq \epsilon n_0$. In particular, every complete blow-up of $G$ and its complement are $\epsilon$-Hadwiger.
\end{theorem}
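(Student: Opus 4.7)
The plan is to apply Theorem \ref{thm0} with the specific choice $p = 1/2$, which makes the distribution of the random graph $G(n_0,1/2)$ identical to that of its complement, so that the same argument controls $h_f(G)$ and $h_f(\bar G)$ simultaneously. With $p=1/2$ we have $b = 1/(1-p) = 2$, so the size guarantee from Theorem \ref{thm0} becomes $n_0 = b^{\epsilon^{-2}+o(1)} = 2^{(1+o(1))\epsilon^{-2}}$, matching the statement.

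I would first recall the argument behind Theorem \ref{thm0}: the proof of Theorem \ref{fractrandom}, when fed the stronger clique-minor bound of Lemma \ref{lemforrand}, shows that $G(n_0,1/2)$ almost surely has $h_f \leq n_0/d + d\cdot s$, where $d = (1+\delta)\sqrt{\log_2 n_0}$ and $s = 4 n_0^{1-\delta}d \ln n_0$ with $\delta = 4\log\log n_0 /\log n_0$. For $n_0 = 2^{(1+o(1))\epsilon^{-2}}$ with the $o(1)$ term chosen appropriately, this bound is at most $\epsilon n_0$. Since $G(n_0,1/2)$ and its complement have the same distribution, the event $\{h_f(G) \leq \epsilon n_0\}$ and the event $\{h_f(\bar G) \leq \epsilon n_0\}$ each occur with probability $1-o(1)$, so by the union bound their intersection has probability $1-o(1)$, hence certainly positive. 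Therefore a graph $G$ with $\max(h_f(G), h_f(\bar G)) \leq \epsilon n_0$ exists on $n_0$ vertices.

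To turn this existence statement into an explicit construction, I would, as in the proof of Theorem \ref{thm0}, enumerate all graphs on $n_0$ labelled vertices and for each candidate test both $G$ and $\bar G$ for the absence of a clique minor of order $s$ and breadth at most $d$; by the argument above, at least one candidate must pass. The enumeration requires $2^{\binom{n_0}{2}} = 2^{(1+o(1))n_0^2/2}$ steps, and each test runs in time at most $n_0^{2ds}$, which is a lower-order factor (since $ds$ is polylogarithmic in $n_0$). This yields the claimed running time $2^{(1+o(1))n_0^2/2}$. Finally, the ``in particular'' clause on blow-ups follows immediately from Proposition \ref{simpleprop}: for any positive integer $t$, $h(G[t]) \leq t \cdot h_f(G) \leq \epsilon \cdot t n_0 = \epsilon |G[t]|$, and analogously for $\bar G$, using that $\overline{G[t]}$ contains $\bar G[t]$ as a subgraph and hence has Hadwiger number at most $h(\bar G[t]) \leq t \cdot h_f(\bar G) \leq \epsilon |G[t]|$.

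The only real step beyond Theorem \ref{thm0} is the observation that self-complementarity of the $p=1/2$ distribution lets us enforce the property on both $G$ and $\bar G$ at the cost of only a union bound, so there is no genuinely new obstacle; the main technical care is in matching constants so that the $o(1)$ slack in $n_0 = 2^{(1+o(1))\epsilon^{-2}}$ absorbs the factor $(1+\delta)/\sqrt{1-\delta}$ introduced when converting between $h_f$ and the breadth-restricted clique-minor bound.
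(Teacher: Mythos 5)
Your proposal is correct and follows essentially the same route as the paper: take a uniformly random graph (equivalently $p=1/2$, so the complement is identically distributed), apply Lemma \ref{lemforrand} and the argument of Theorem \ref{fractrandom} to both $G$ and $\bar G$ via a union bound, and then derandomize by exhaustively testing all $2^{\binom{n_0}{2}}$ graphs for the breadth-$d$ clique-minor property, with Proposition \ref{simpleprop} handling the blow-ups. Two cosmetic slips: $ds$ is roughly $n_0/\log^2 n_0$, not polylogarithmic (though $n_0^{ds}=2^{O(n_0/\log n_0)}$ is still a lower-order factor), and the containment goes the other way --- $\overline{G[t]}=\bar G(t)$ is a subgraph of $\bar G[t]$, which is what gives the needed upper bound on its Hadwiger number.
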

\begin{proof}
From Theorem \ref{fractrandom}, a graph on $n_0$ vertices picked uniformly at random almost surely has fractional
Hadwiger number $(1+o(1))\frac{n_0}{\sqrt{\log n_0}}$. Furthermore, Lemma \ref{lemforrand} shows that a graph on $n_0$ vertices picked uniformly at random almost surely
has the stronger property that its largest clique minor of depth at most $d=(1+\delta)\frac{n_0}{\sqrt{\log n_0}}$ with $\delta=4\frac{\log \log n_0}{\log n_0}$ 
has order less than $s=4n_0^{1-\delta}d\ln n_0$. As in the proof of Theorem \ref{fractrandom}, we can bound the fractional Hadwiger number
from above by $n_0/d+ds$, , which is less than $\epsilon n_0$ if the $o(1)$ term in the definition of $n_0$ is picked correctly. To show that a graph and its complement does not have a clique minor of depth at most $d$ and order $s$, it suffices to simply test all possible disjoint vertex subsets $V_1,\ldots,V_s$ with $|V_i| \leq d$ for $1 \leq i \leq s$,
and check if each $V_i$ is connected and there is an edge between each $V_i$ and $V_j$ for $i \not = j$. 
There are at most $n_0^{ds}$ such $s$-tuples of subsets to try.

Thus, testing each graph on $n_0$ vertices, we find the desired graph $G$ for which $G$ and 
its complement do not contain a clique minor order $s$ and depth at most $d$. The number of graphs on $n_0$ vertices is $2^{{n_0 \choose 2}}$, and the amount of time, roughly $n_0^{ds}$, needed to test each such graph is a lower order term.
\end{proof}

\section{Concluding remarks}

$\bullet$ We showed how to explicitly construct a dense graph on $n$ vertices with Hadwiger number $o(n)$. However, random graphs
show that such graphs exist with Hadwiger number $O(\frac{n}{\sqrt{\log n}})$. It remains an interesting open problem to construct such graphs.

\vspace{0.1cm}

\noindent $\bullet$ We conjecture that almost all graphs $G$ satisfy $h(G)=h_f(G)$, i.e., a random graph on $n$ vertices almost surely satisfies that
its Hadwiger number and fractional Hadwiger number are equal. We proved in Theorem \ref{fractrandom} that these numbers are asymptotically equal for almost all graphs. This conjecture is equivalent to showing that almost all graphs satisfy the the ratio of the Hadwiger number to the number of vertices
is equal for all blow-ups of the graph.

\vspace{0.1cm}

\noindent $\bullet$ Note that if $H$ is a minor of $G$, then $h_f(H) \leq h_f(G)$. It follows that the family $\mathcal{F}_C$ of graphs $G$ with $h_f(G) < C$ is closed under
taking minors. The Robertson-Seymour theorem implies that $\mathcal{F}_C$  is characterized by a finite list of forbidden minors. For each $C$, what is this
family? We understand this family for $C \leq 4$ as then $h(G) =h_f(G)$.

\vspace{0.1cm}

\noindent $\bullet$ As noted by Seymour \cite{Se}, it would be interesting to prove a fractional analogue of Hadwiger's conjecture, that $h_f(G) \geq \chi(G)$ for all graphs $G$.
As $h_f(G) \geq h(G)$, this conjecture would follow from Hadwiger's conjecture. This may be hard in the case of graphs of independence number $2$. 
For such graphs on $n$ vertices, $\chi(G) \geq n/2$, and so Hadwiger's conjecture would imply $h(G) \geq n/2$, but the best known lower bound \cite{Fo} on the 
Hadwiger number is of the form $h(G) \geq (\frac{1}{3}+o(1))n$. Improving this bound to $h(G) \geq (\frac{1}{3}+\epsilon)n$ 
for some absolute constant $\epsilon>0$ is believed to be a challenging problem, and it is equivalent to proving a similar lower bound for $h_f(G)$.

\vspace{0.1cm}

\noindent $\bullet$ Graph lifts are another interesting operation. An $r$-lift of a graph $G=(V,E)$ is the graph on $V \times [r]$, whose edge set is the union 
of perfect matchings between $\{u\} \times [r]$ and $\{v\} \times [r]$ for each edge $(u,v) \in E$. Drier and Linial \cite{DL} studied clique minors in 
lifts of the complete graph $K_n$. One of the interesting open questions remaining here is whether every lift of the complete graph $K_n$ has Hadwiger number 
$\Omega(n)$. 

\vspace{0.1cm}

\noindent $\bullet$ Treewidth is an important graph parameter introduced by Robertson and Seymour \cite{RS84} in their proof of Wagner's conjecture.
A {\it tree decomposition} of a graph $G=(V,E)$ is a pair $(X,T)$, where $X = \{X_1, ..., X_t\}$ is a family of subsets of $V$,
and $T$ is a tree whose nodes are the subsets $X_i$, satisfying the following three properties.
\begin{enumerate}
\item $V=X_1 \cup \ldots \cup X_t$.
\item For every edge $(v,w)$ in the graph, there is a subset $X_i$ that contains both $v$ and $w$.
\item If $X_i$ and $X_j$ both contain a vertex $v$, then all nodes $X_z$ of the tree in the (unique) path between $X_i$ and $X_j$ contain $v$ as well.
\end{enumerate}

Robertson and Seymour proved that treewidth is related to the largest grid minor. Indeed, they proved that for each $r$ there is $f(r)$ such that every graph with treedwidth at least $f(r)$ contains a $r \times r$ grid minor. The original upper bound on $f(r)$ was
enormous. It was later improved by Robertson, Seymour, and Thomas \cite{RST94}, who showed $c r^2 \log r \leq f(r) \leq 2^{c'r^5}$ where $c$ and $c'$ are
absolute constants. In the other direction, it is easy to show that any graph which contains an $r \times r$ grid minor has treewidth at least $r$.

Separators are another important concept in graph theory which have many algorithmic, extremal, and enumerative applications. A vertex subset $V_0$ of a graph $G$ is a {\it separator} for $G$ if there is a partition $V(G)=V_0 \cup V_1 \cup V_2$
such that $|V_1|,|V_2| \leq 2n/3$ and there are no edges with one vertex in $V_1$ and the other vertex in $V_2$. A fundamental result of Lipton and Tarjan states that
every planar graph on $n$ vertices has a separator of size $O(\sqrt{n})$. This result has been generalized in many directions, to graphs embedded on a surface
\cite{GHT}, graphs with a forbidden minor \cite{AST}, intersection graphs of balls in $\mathbb{R}^d$, and intersection graphs of geometric objects in the plane
\cite{FPa}, \cite{FPb}. The {\it separation number} of a graph $G$ is the minimum $s$ for which every subgraph of $G$ has a separator of size at most $s$.

The bramble number of a graph $G$ is the minimum $b$ such that for every bramble for $G$ there is a set of $b$ vertices for which every subgraph in the bramble contains at
least one of these $b$ vertices.

Two graph parameters are {\it comparable} if one of them can be bounded as a function of the other, and vice versa. Robertson and Seymour
showed that treewidth and largest grid minor are comparable. The following theorem which we state without proof extends this result.
It may be surprising because some of these parameters appear from their definitions to be unrelated.

\begin{theorem}\label{connect}
Fractional Hadwiger number, $r$-integral Hadwiger number for each $r \geq 2$, bramble number, separation number, treedwidth, and maximum grid minor size are all comparable.
\end{theorem}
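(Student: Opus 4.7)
\noindent\textbf{Proof plan for Theorem \ref{connect}.} The strategy is to insert the fractional Hadwiger number and each $r$-integral Hadwiger number (for $r \geq 2$) into the equivalence class already formed by treewidth, bramble number, separation number, and maximum grid-minor size. These four are pairwise comparable by the classical results cited in the paper: Seymour--Thomas duality gives bramble number $= tw + 1$, the Robertson--Seymour grid-minor theorem gives $tw$ and grid-minor size comparable (albeit with the enormous gap $cr^2 \log r \leq f(r) \leq 2^{c'r^5}$), and standard arguments give comparability of $tw$ and separation number up to constant factors. It thus suffices to prove (i) $h_r(G) \leq h_f(G) \leq bn(G)$ for each $r$, placing $h_f$ and all $h_r$ below bramble number, and (ii) a lower bound $h_r(G) \geq c(r)\cdot g(G)$ where $g(G)$ is the largest grid-minor size and $c(r)>0$, placing each $h_r$ above the grid-minor size.

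For (i), the only nontrivial inequality is $h_f(G) \leq bn(G)$, which is essentially LP duality between fractional bramble packing and integral bramble hitting. Fix any bramble $\mathcal{B}$ and a feasible weighting $w$. Let $S$ be a minimum hitting set for $\mathcal{B}$, so $|S| \leq bn(G)$ and $S \cap B \neq \emptyset$ for every $B \in \mathcal{B}$. Then
$$\sum_{B \in \mathcal{B}} w(B) \;\leq\; \sum_{B \in \mathcal{B}} w(B)\,|S \cap B| \;=\; \sum_{v \in S} \sum_{B \ni v} w(B) \;\leq\; |S|,$$
using the vertex-coverage constraint $\sum_{B \ni v} w(B) \leq 1$. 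Taking the supremum over $(\mathcal{B}, w)$ gives $h_f(G) \leq bn(G)$, and $h_r \leq h_f$ is immediate from the definitions.

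For (ii), suppose $G$ contains a $k \times k$ grid minor, witnessed by pairwise disjoint connected vertex sets $U_{a,b}$ ($1 \leq a,b \leq k$) together with edges realising all grid-adjacencies. Define the $i$-th \emph{cross} $C_i = \bigcup_b U_{i,b} \,\cup\, \bigcup_a U_{a,i}$, which is connected (all pieces meet at $U_{i,i}$). The collection $\{C_1,\ldots,C_k\}$ is a bramble because $U_{i,j} \subseteq C_i \cap C_j$ for every $i \neq j$, and each vertex of $G$ lies in at most one $U_{a,b}$ and hence in at most two crosses. For any fixed $r \geq 2$, assigning each cross the weight $\lfloor r/2 \rfloor / r \geq 1/r$ produces a feasible $r$-integral weighting of total mass at least $k/r$, so $h_r(G) \geq k/r$. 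Combining (i) and (ii), every one of the new parameters is sandwiched between $bn(G)$ above and $g(G)/r$ below, and composing with the classical chain shows that all six parameters lie in a single comparability class.

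The main obstacle is not conceptual but quantitative: because the only available lower bound on grid-minor size in terms of treewidth is polylogarithmic (from Robertson--Seymour's $f(r) \leq 2^{c'r^5}$), the implied comparison function between $h_f$ and $tw$ is linear above but only polylogarithmic below. Any substantial quantitative strengthening of Theorem~\ref{connect} therefore appears to require a corresponding strengthening of the grid-minor theorem, a well-known difficult problem in its own right.
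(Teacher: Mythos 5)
The paper states Theorem \ref{connect} explicitly \emph{without} proof, so there is no argument of the author's to compare against; judged on its own, your proposal is correct and complete in outline. The sandwich $g(G)/r \leq h_r(G) \leq h_f(G) \leq \mathrm{bn}(G) = \mathrm{tw}(G)+1$ is exactly the right skeleton: the hitting-set/LP-duality bound for the upper inequality is sound, the crosses of a $k\times k$ grid minor do form a bramble in which every vertex lies in at most two members (so weight $\lfloor r/2\rfloor/r$ is feasible and in fact gives $h_r(G)\geq k/3$ for all $r\geq 2$, slightly better than the $k/r$ you claim), and composing with the classical equivalences among treewidth, bramble number, separation number, and grid-minor size closes the loop, with the grid-minor theorem being the sole source of the weak (polylogarithmic) comparison function, as you correctly observe.
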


The dependence between some of these graph parameters is not well understood and improving the bounds remains an interesting open problem.

\vspace{0.1cm}

\noindent {\bf Acknowledgements:} I am greatly indebted to Noga Alon, Nati Linial, and Paul Seymour for helpful conversations.

\end{document}